\newtheorem{theorem}{Theorem}[section]
\newtheorem{lemma}[theorem]{Lemma}
\newtheorem{example}[theorem]{Example}
\theoremstyle{definition}
\newtheorem{definition}[theorem]{Definition}
\numberwithin{equation}{section}
\begin{document}

\title{Descriptive characterizations of the integral by seminorms}

\author{ Sokol Kaliaj, Zenepe Shkoza}

\address{University of Elbasan,
Mathematics Department,
Elbasan, Albania}

\email{sokol\_bush@yahoo.co.uk}

\address{University of Elbasan,
Mathematics Department,
Elbasan, Albania}

\email{shnepi2014@gmail.com}

\thanks{}

\subjclass[2010]{Primary 28B05, 26A46, 46A03; Secondary 46G10, 46G05. }

\keywords{Hausdorff locally convex topological vector space, 
integrable by seminorms, limit average range, differential.}

\begin{abstract}
In this paper, 
we first define the concept of the limit average range of a function defined on $[0,1]$ and taking values in a 
Hausdorff locally convex topological vector space (locally convex space) $X$. 
Then, we present characterizations of the primitive $F:[0,1] \to X$ of an integrable by seminorms 
function $f:[0,1] \to X$ 
in terms of the limit average range of $F$.
It is shown that the limit average range characterizes the integral by seminorms 
better then the usual differential.
\end{abstract}

\maketitle

\section{ Introduction and Preliminaries }

The integral by seminorms is an extension of Bochner integral to locally convex spaces. 
The following theorem gives necessary and sufficient conditions 
for a Banach space valued function $F:[0,1] \to X$ to be the primitive of a 
Bochner integrable function $f:[0,1] \to X$  
in terms of the derivative of $F$, c.f. Theorem 7.4.15 in \cite{GUS}. 
\begin{theorem}\label{T1.1}
Let $X$ be a Banach space and let $f,F : [0; 1] \to X$ be functions. 
Then, the following are equivalent:
\begin{itemize}
\item[(i)]
$f$ is Bochner integrable on $[0,1]$ with the primitive $F$, i.e., 
$$
\widetilde{F}(I) = (B)\int_{I} f(t)d\lambda(t),
\text{ for all }
I \in \mathcal{I},
$$ 
where $\mathcal{I}$ is the family of all non-degenerate closed subintervals of $[0,1]$,  
$\lambda$ is the Lebesgue measure on $[0,1]$ and 
$
\widetilde{F}([u,v]) = F(v)-F(u),
$
for all $[u,v] \in \mathcal{I}$,  
\item[(ii)]
$F$ is strongly absolutely continuous, 
$F'(t)$ exists and $F'(t)=f(t)$, at almost all $t \in [0,1]$.
\end{itemize}
\end{theorem}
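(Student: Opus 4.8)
The plan is to prove the two implications separately, leaning on the standard machinery of the Bochner integral together with the Lebesgue differentiation theorem.

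For $(i)\Rightarrow(ii)$, I would first recall that a Bochner integrable $f$ has $\|f(\cdot)\|$ Lebesgue integrable, so the scalar indefinite integral $E\mapsto\int_E\|f\|\,d\lambda$ is absolutely continuous: given $\varepsilon>0$ there is $\delta>0$ with $\int_E\|f\|\,d\lambda<\varepsilon$ whenever $\lambda(E)<\delta$. Then for any finite non-overlapping family $\{[u_i,v_i]\}\subset\mathcal I$ with $\sum_i(v_i-u_i)<\delta$, the estimate $\sum_i\|F(v_i)-F(u_i)\|=\sum_i\bigl\|\int_{[u_i,v_i]}f\,d\lambda\bigr\|\le\sum_i\int_{[u_i,v_i]}\|f\|\,d\lambda<\varepsilon$ yields strong absolute continuity of $F$. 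For the derivative I would invoke the vector Lebesgue differentiation theorem: since $f$ is strongly measurable it is a.e. separably valued, and almost every $t$ is a Lebesgue point, i.e. $\tfrac1h\int_t^{t+h}\|f(s)-f(t)\|\,d\lambda(s)\to0$ as $h\to0$; at such $t$ one has $\tfrac1h\bigl(F(t+h)-F(t)\bigr)-f(t)=\tfrac1h\int_t^{t+h}\bigl(f(s)-f(t)\bigr)\,d\lambda(s)\to0$, so $F'(t)=f(t)$ a.e.

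For $(ii)\Rightarrow(i)$, I would argue in three steps. First, strong absolute continuity forces $F$ to have bounded variation (cover $[0,1]$ by finitely many intervals shorter than the $\delta$ attached to $\varepsilon=1$), and by a refinement argument the scalar variation function $V(t)=\mathrm{Var}(F,[0,t])$ is itself absolutely continuous, so $V(1)=\int_0^1V'\,d\lambda<\infty$. Second, $F$ is continuous on $[0,1]$, hence $F([0,1])$ is separable, so $f=F'$, being the a.e. limit of the measurable maps $t\mapsto n\bigl(F(t+\tfrac1n)-F(t)\bigr)$, is strongly measurable; and from $\|F(t+h)-F(t)\|\le V(t+h)-V(t)$ we get $\|f(t)\|=\|F'(t)\|\le V'(t)$ a.e., whence $\int_0^1\|f\|\,d\lambda\le V(1)<\infty$, so $f$ is Bochner integrable. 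Third, to identify the primitive, set $G(v)=\int_0^v f\,d\lambda$; by the already proven implication $G'=f$ a.e., so $H=F-G$ is strongly absolutely continuous with $H'=0$ a.e. Composing with an arbitrary continuous linear functional $x^*$ on $X$ gives a scalar absolutely continuous function with derivative vanishing a.e., hence constant, so $x^*\bigl(H(v)-H(u)\bigr)=0$ for all $x^*$ and all $u\le v$; thus $H$ is constant and $\widetilde F([u,v])=F(v)-F(u)=G(v)-G(u)=\int_{[u,v]}f\,d\lambda$, i.e. $(i)$.

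The main obstacle is the vector-valued Lebesgue differentiation theorem used in $(i)\Rightarrow(ii)$: one must show almost every point is a Lebesgue point of the Bochner integrable $f$. The clean route is to exploit strong measurability to reduce to a separable range, fix a countable dense set $\{x_k\}$, apply the scalar Lebesgue differentiation theorem to each $t\mapsto\|f(t)-x_k\|$ on a common full-measure set, and then for fixed $t$ approximate $f(t)$ by a suitable $x_k$ to control $\tfrac1h\int_t^{t+h}\|f(s)-f(t)\|\,d\lambda(s)$. A secondary technical point is the pair of facts that $V$ is absolutely continuous and $\|F'\|\le V'$ a.e.; both require the refinement argument indicated above rather than following formally from the definitions.
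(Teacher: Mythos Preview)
Your argument is correct and follows the standard route to this classical result. One small remark: in the measurability step of $(ii)\Rightarrow(i)$ you write that $t\mapsto n\bigl(F(t+\tfrac1n)-F(t)\bigr)$ is measurable; in fact it is continuous (since $F$ is), so this is fine, and the a.e.\ limit $f$ is separably valued because each difference quotient lies in the separable subspace generated by $F([0,1])$.

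However, there is nothing to compare your proof against: the paper does not prove Theorem~1.1 at all. It is quoted in the preliminaries with the attribution ``c.f.\ Theorem 7.4.15 in \cite{GUS}'' and is used only as a black box (for the Banach quotients $\overline{X}_p$) inside the proofs of Theorems~2.2 and~2.3. So your write-up supplies a self-contained proof where the paper simply cites the literature.
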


In this paper, 
functions defined on $[0,1]$ and taking values in a 
locally convex space $X$ are considered. 
At first, the concept of the limit average range of a function $F:[0,1] \to X$ at a point $t \in [0,1]$ 
is defined. 
Then, we present characterizations of the primitive $F:[0,1] \to X$ of 
an integrable by seminorms function $f:[0,1] \to X$ in terms of the limit average range of $F$, 
Theorem \ref{T2.1} and Theorem \ref{T2.2}. 
We give an example of a function $F:[0,1] \to X$ which is the primitive of an integrable by seminorms function $f:[0,1] \to X$ 
and it is shown that $F$ has the limit average range at all $t \in (0,1)$, 
but $F$ is not differentiable at any $t \in (0,1)$, 
Example \ref{Example2.1}.

Let $X$ be a locally convex space with the topology $\tau$ 
and let $\mathscr{P}$ be the family of all continuous seminorms on $X$.  
For any $p \in \mathscr{P}$,   
we denote by $\widetilde{X}_{p}$ the quotient vector space $X/p^{-1}(0)$, by $\phi_{p}:X \to \widetilde{X}_{p}$
the canonical quotient map, 
by $(\widetilde{X}_{p},\widetilde{p})$ the quotient normed space  
and by $(\overline{X}_{p},\overline{p})$ the completion of $(\widetilde{X}_{p},\widetilde{p})$. 
For every $p,q \in \mathscr{P}$ such that $p \leq q$, we denote by $\widetilde{g}_{pq} : 
\widetilde{X}_{q} \rightarrow \widetilde{X}_{p}$  the map defined as follows
$$
\widetilde{g}_{pq}(w_{q})=w_{p},
\text{ for each }
w_{q} \in \widetilde{X}_{q}, 
$$
where $w_{p} = \phi_{p}(x)$, for some vector $x \in w_{q}$. 
By  
$
\overline{g}_{pq} 
$
the continuous linear extension of $\widetilde{g}_{pq}$ to $\overline{X}_{q}$ is denoted.

We now denote by 
$$
\underset{\longleftarrow}{\lim} ~\overline{g}_{pq}~\overline{X}_{q}
\qquad
(~\underset{\longleftarrow}{\lim} ~\widetilde{g}_{pq}~\widetilde{X}_{q}~)
$$ 
\textit{the projective limit} of the family 
$$
\{(\overline{X}_{p}, \overline{p}): p \in \mathscr{P}\}
\qquad
(~\{(\widetilde{X}_{p}, \widetilde{p}): p \in \mathscr{P}\}~)
$$ 
with respect to the family 
$$
\{ \overline{g}_{pq}: p,q \in \mathscr{P}, p \leq q \}
\qquad
(~\{ \widetilde{g}_{pq}: p,q \in \mathscr{P}, p \leq q \}~), 
$$
c.f \cite{SCH}, p.52. 
By virtue of II.5.4 in \cite{SCH}, p.53, 
if $X$ is a complete locally convex space, 
then we have
\begin{equation}\label{eqSCH}
X 
\equiv 
\underset{\longleftarrow}{\lim} ~\widetilde{g}_{pq}~\widetilde{X}_{q} 
\qquad
\text{and}
\qquad
\underset{\longleftarrow}{\lim} ~\widetilde{g}_{pq}~\widetilde{X}_{q}
=
\underset{\longleftarrow}{\lim} ~\overline{g}_{pq}~\overline{X}_{q},
\end{equation} 
where the symbol '$\equiv$' shows that the two spaces above are isomorphic.


Assume that a function $F:[0,1] \to X$ and a point $t \in [0,1]$ are given.  
We put
\begin{equation*}
\Delta F(t,h) = \frac{F(t+h)-F(t)}{h},~(~h \neq 0~)
\qquad
A_{F}(t,\delta) = 
\{ \Delta F(t,h) : 0< |h| < \delta  \}
\end{equation*}
and  
\begin{equation*}
A_{F}^{(p)}(t)= 
\bigcap_{\delta > 0} \overline{A}_{F}^{(p)}(t,\delta)
\qquad
A_{F}(t) = \bigcap_{p \in \mathscr{P}} A_{F}^{(p)}(t)
\end{equation*}
where $\overline{A}_{F}^{(p)}(t,\delta)$ is the closure of $A_{F}(t,\delta)$ with respect to 
the seminorm $p \in \mathscr{P}$. 
The set $A_{F}(t)$ is said to be \textit{the average range} of $F$ at $t$. 
Since 
$$
\overline{A}_{F}(t,\delta) = 
\bigcap_{p \in \mathscr{P}} \overline{A}_{F}^{(p)}(t,\delta)
$$
we have also
$$
A_{F}(t) = \bigcap_{\delta >0} \overline{A}_{F}(t,\delta).
$$
Define
$$
\textit{p-diam}(W) = \sup \{ p(x-y) : x,y \in W \}
\qquad
(~W \subset X, W \neq \emptyset~),
$$
and $\textit{p-diam}(\emptyset) = 0$. 
We say that $F$ has \textit{the limit average range} at $t \in [0,1]$, 
if $A_{F}(t)$ is a bounded set, 
and for each $\varepsilon >0$ and $p \in \mathscr{P}$ 
there exists $\delta_{\varepsilon,p} >0$ such that 
$$
\textit{p-diam} 
\left ( 
A_{F}(t,\delta_{\varepsilon,p}) 
\right )
<
\textit{p-diam} 
\left ( 
A_{F}(t)
\right ) 
+ \varepsilon.
$$  
The  function $F$ is said to be 
\textit{differentiable} at the point $t$, 
if there exists a vector  
$x \in X$ such that for each $p \in \mathscr{P}$, we have
\begin{equation*}
\lim_{h \to 0} 
p(\Delta F(t,h) - x)=0.
\end{equation*}
By $x = F'(t)$ the derivative of $F$ at $t$ is denoted.

We now recall the concept of the integral by seminorms, 
Definition 2.4 in \cite{BLOND}.

\begin{definition}\label{D1.1}
A function $f:[0,1] \to X$ is said to be 
\textit{integrable by seminorms} if, for any 
$p \in \mathscr{P}$, there exists a sequence 
$(f_{k}^{(p)})_{k}$ of measurable simple functions and a subset 
$Z_{p} \subset [0,1]$ with $\lambda(Z_{p})=0$, such that
\begin{itemize}
\item[(i)]
for all $t \in [0,1] \setminus Z_{p}$, we have
$$
\lim_{k \to \infty} p \left ( f_{k}^{(p)}(t)-f(t) \right ) =0,
$$
\item[(ii)]
each function $p \circ (f_{k}^{(p)}-f)$ is Lebesgue integrable on $[0,1]$ and   
$$
\lim_{k \to \infty} \int_{[0,1]} p 
\left ( f_{k}^{(p)}(t)-f(t) \right ) d\lambda(t)=0,
$$
\item[(iii)]
for each $E \in \mathcal{L}$ there exists a vector $x_{E} \in X$ such that 
$$
\lim_{k \to \infty} p \left ( \int_{E} f^{(p)}_{k}(t)d\lambda(t) -x_{E} \right ) =0,
$$
where $\mathcal{L}$ is the family of all Lebesgue measurable subsets of $[0,1]$.
\end{itemize}
The vector $x_{E}$ is said to be the \textit{integral by seminorms} of $f$ over $E$, 
and we set 
$$
x_{E} = \int_{E} f(t) d\lambda(t).
$$
\end{definition}
This notion coincides with the Bochner integral in a Banach space. 
For more information about 
the integral by seminorms and the Bochner integral, 
we refer to \cite{BLOND}, \cite{DIES}, \cite{DUN}, \cite{MIK} and 
\cite{GUS}.

A function $F:[0,1] \to X$ is said to be  
\textit{strongly absolutely continuous ($sAC$)},  
if for each $p \in \mathscr{P}$, we have that $\phi_{p} \circ F$ is $sAC$, 
i.e., given $\varepsilon >0$ there exists $\eta_{\varepsilon p}>0$ such that 
for each finite collection $\{ I_{1}, \dotsc, I_{n} \}$ of 
pairwise nonoverlapping intervals in $\mathcal{I}$, we have 
$$
\sum_{j=1}^{n} \lambda(I_{j}) < \eta_{\varepsilon p} 
 \Rightarrow 
\sum_{j=1}^{n} p(\widetilde{F}(I_{j})) < \varepsilon.
$$

\section{The Main Results}

The main results are Theorem \ref{T2.1} and Theorem \ref{T2.2}. 
Using Pettis's Measurability Theorem, 
Theorem II.1.2 in \cite{DIES}, 
and Severini-Egorov-Theorem, 
Theorem III.6.12 in \cite{DUN}, it can be proved 
the following auxiliary lemma.

\begin{lemma}\label{L2.1}
Let $X$ be a Banach space with the norm $|| \cdot ||$ 
and let $f:[0,1] \to X$ be a function. 
If $f$ is Bochner integrable on $[0,1]$,  
then there exists a sequence $(f_{k})$ of measurable simple functions such that
\begin{equation}\label{eqL21.01}
f_{k}([0,1]) \subset f([0,1]),
\quad
\text{for each}
\quad
k \in \mathbb{N},
\end{equation}
and
\begin{equation}\label{eqL21.02}
\lim_{k \to \infty} f_{k}(t) = f(t),
\quad
\text{at almost all}
\quad
t \in [0,1].
\end{equation}
Moreover, $(f_{k})$ converges to $f$ in $\lambda$-measure, i.e., 
$$
\lim_{k \to \infty} \lambda( \{ t \in [0,1] : ||f_{k}(t)-F(t)|| \geq \eta \} ) = 0,
\quad
\text{for every}
\quad
\eta >0.
$$ 
\end{lemma}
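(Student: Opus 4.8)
The plan is to build the sequence $(f_k)$ from a countable dense subset of the essential range of $f$ that is itself contained in $f([0,1])$, using a ``nearest among the first $k$ points'' device so that \eqref{eqL21.01} and \eqref{eqL21.02} hold simultaneously; the convergence in $\lambda$-measure will then drop out of the a.e.\ convergence on the finite measure space $[0,1]$ via the Severini--Egorov Theorem. The one genuine difficulty is the interplay between the two conditions in \eqref{eqL21.01}--\eqref{eqL21.02}: strong measurability alone presents $f$ as an a.e.\ limit of simple functions, but those simple functions need not take values in $f([0,1])$, so the range constraint has to be forced into the construction from the outset.

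First I would record that, being Bochner integrable, $f$ is strongly measurable, hence by Pettis's Measurability Theorem essentially separably valued: there is $N \subset [0,1]$ with $\lambda(N)=0$ for which $K := f([0,1]\setminus N)$ is separable. I fix a sequence $(x_n)_{n \in \mathbb{N}}$ in $K$ that is dense in $K$; since $K \subseteq f([0,1])$, each $x_n$ is a value of $f$. I also note that for every $x \in X$ the real-valued map $t \mapsto \|f(t)-x\|$ is measurable, $f$ being strongly measurable, so all functions built from these norms are measurable.

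Next, for each $k \in \mathbb{N}$ I put $d_k(t) = \min_{1 \le n \le k}\|f(t)-x_n\|$, let $\nu_k(t)$ be the least index $n \le k$ with $\|f(t)-x_n\| = d_k(t)$, and define $f_k(t) = x_{\nu_k(t)}$. Then $f_k = \sum_{n=1}^{k} x_n \mathbf{1}_{E_{k,n}}$ with $E_{k,n} = \{\,t:\|f(t)-x_n\| \le d_k(t)\,\} \setminus \bigcup_{i<n}\{\,t:\|f(t)-x_i\| \le d_k(t)\,\}$ measurable, so each $f_k$ is a measurable simple function and $f_k([0,1]) \subseteq \{x_1,\dots,x_k\} \subseteq f([0,1])$, which is \eqref{eqL21.01}. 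For $t \in [0,1]\setminus N$ the numbers $d_k(t)$ are non-increasing in $k$ and converge to $\inf_{n}\|f(t)-x_n\| = 0$, since $f(t) \in K = \overline{\{x_n : n \in \mathbb{N}\}}$; as $\|f(t)-f_k(t)\| = d_k(t)$, this yields $f_k(t) \to f(t)$ for every $t \in [0,1]\setminus N$, that is, \eqref{eqL21.02}. This monotone ``min over the first $k$ points'' step, together with the fact that Pettis's theorem lets us place the dense set inside the range, is precisely what overcomes the obstacle mentioned above.

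Finally, to get convergence in $\lambda$-measure I fix $\eta > 0$ and $\varepsilon > 0$ and apply the Severini--Egorov Theorem to $(f_k)$, which converges a.e.\ on $[0,1]$ with $\lambda([0,1]) < \infty$: there is a measurable $E \subseteq [0,1]$ with $\lambda([0,1]\setminus E) < \varepsilon$ on which $f_k \to f$ uniformly. Hence there is $k_0$ such that for all $k \ge k_0$ one has $\{\,t \in [0,1] : \|f_k(t)-f(t)\| \ge \eta\,\} \subseteq [0,1]\setminus E$, so $\lambda(\{\,t : \|f_k(t)-f(t)\| \ge \eta\,\}) < \varepsilon$. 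Since $\varepsilon>0$ is arbitrary, $\lambda(\{\,t \in [0,1] : \|f_k(t)-f(t)\| \ge \eta\,\}) \to 0$ as $k \to \infty$, which would complete the argument.
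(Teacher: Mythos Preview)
Your argument is correct and follows precisely the route the paper indicates: the paper does not spell out a proof of this lemma but only remarks that it can be obtained from Pettis's Measurability Theorem together with the Severini--Egorov Theorem, and your construction (a dense sequence inside $f([0,1]\setminus N)$ via Pettis, the nearest-of-the-first-$k$ selection to force the range constraint, and Egorov to pass from a.e.\ convergence to convergence in measure) is exactly the intended implementation of that hint. One cosmetic point: you write $K=\overline{\{x_n\}}$, whereas strictly $K\subseteq\overline{\{x_n\}}$; only the inclusion is needed, and you use only the inclusion.
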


We now present a descriptive characterization of the integral by seminorms 
of a function taking values in a complete locally convex space.

\begin{theorem}\label{T2.1}
Let $X$ be a complete locally convex space and let $F:[0,1] \to X$ be a function. 
If $F$ is $sAC$ and  has the limit average range at almost all $t \in [0,1]$, then $F$ is the primitive of an integrable by seminorms function $f$, i.e.,
\begin{equation}\label{eqT21.0}
\widetilde{F}(I) = \int_{I} f(t) d\lambda(t), 
\quad
\text{for all}
\quad
I \in \mathcal{I}. 
\end{equation}
\end{theorem}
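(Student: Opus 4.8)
The plan is to reduce the locally convex case to the Banach space case by projecting through the quotient maps $\phi_p$ and then exploiting the projective limit description \eqref{eqSCH}. First I would fix $p \in \mathscr{P}$ and consider $F_p = \phi_p \circ F : [0,1] \to \widetilde{X}_p$, which by hypothesis is $sAC$ as a map into the normed space $(\widetilde{X}_p, \widetilde p)$, hence extends to an $sAC$ map into the Banach space $(\overline{X}_p, \overline p)$. The key observation is that the average range behaves well under the quotient map: one checks that $\phi_p(A_F(t,\delta)) \subset A_{F_p}(t,\delta)$ and, combining this with the definition of $p\text{-}diam$ via $\overline p(\phi_p(\cdot))$, the limit average range property of $F$ with respect to $p$ forces $F_p$ to have a genuine (norm) limit average range at a.e.\ $t$, with $\overline p\text{-}diam(A_{F_p}(t)) = \textit{p-diam}(A_F(t))$. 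A standard fact about vector-valued functions in Banach spaces — that an $sAC$ function whose average range has diameter tending to $0$ in the appropriate averaged sense is differentiable a.e.\ with the derivative being the single point of $A_{F_p}(t)$ when that set is a singleton — should let me conclude that $F_p$ is the primitive of a Bochner integrable $f_p : [0,1] \to \overline{X}_p$ via Theorem \ref{T1.1}. (Here one uses that bounded $A_{F_p}(t)$ together with vanishing averaged diameter makes $A_{F_p}(t)$ a singleton a.e., identifying $F_p'(t)$.)

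Next I would assemble these fiberwise primitives into a single function. Using Lemma \ref{L2.1} applied to each $f_p$, I get measurable simple functions $(f^{(p)}_k)_k$ valued in $\overline{X}_p$ converging a.e.\ and in $L^1(\overline p)$ to $f_p$, which already gives conditions (i) and (ii) of Definition \ref{D1.1} for the Banach-space data, and condition (iii) follows from Bochner integrability of $f_p$. The remaining task is compatibility: for $p \le q$ I must show $\overline g_{pq} \circ f_q = f_p$ a.e., so that the family $(f_p)_p$ defines an element of $\underset{\longleftarrow}{\lim}\, \overline g_{pq}\, \overline X_q$. This is forced by the commuting relation $\overline g_{pq} \circ F_q = F_p$ (which holds by construction of $\widetilde g_{pq}$) together with uniqueness of derivatives: $\overline g_{pq}$ is continuous linear, so it carries $F_q'(t)$ to $F_p'(t)$ for a.e.\ $t$. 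By \eqref{eqSCH}, completeness of $X$ then yields a single function $f : [0,1] \to X$ with $\phi_p \circ f = \phi_p \circ $ (representative of $f_p$) for every $p$; more carefully, I would define $f(t)$ as the point of $\underset{\longleftarrow}{\lim}\, \widetilde g_{pq}\, \widetilde X_q \equiv X$ determined by the compatible family $(f_p(t))_p$ wherever all the a.e.\ conditions hold, and arbitrarily (say $0$) elsewhere.

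Finally I would verify that this $f$ is integrable by seminorms with primitive $F$. The simple functions $f^{(p)}_k$ I used live in $\overline{X}_p$, not necessarily in $\widetilde{X}_p = \phi_p(X)$; to meet Definition \ref{D1.1} verbatim I need simple functions valued in $X$. I would handle this by choosing, via \eqref{eqL21.01}, the $f^{(p)}_k$ to take values in $f_p([0,1])$, and then noting that (after modification on the null set where the projective-limit point fails to exist in $\widetilde{X}_p$) these values are actually in $\phi_p(X)$, so I can lift each to a simple $X$-valued function $g^{(p)}_k$ with $\phi_p \circ g^{(p)}_k = f^{(p)}_k$; conditions (i)–(iii) then transfer directly since $p = \overline p \circ \phi_p$ and $\int_E g^{(p)}_k\, d\lambda$ maps under $\phi_p$ to $\int_E f^{(p)}_k\, d\lambda$. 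For (iii) the target vector $x_E \in X$ is supplied again by the projective limit: the compatible family $\bigl(\int_E f_p\, d\lambda\bigr)_p$ determines $x_E \in X$, and \eqref{eqT21.0} follows on intervals from $\widetilde{F_p}(I) = \int_I f_p\, d\lambda$ for every $p$, since a locally convex space is separated so equality of all $p$-projections gives equality in $X$.

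I expect the main obstacle to be the lifting/compatibility bookkeeping in the last two paragraphs: ensuring that a single $X$-valued $f$ (and single simple functions $g^{(p)}_k$) can be chosen so that \emph{all} the fiberwise a.e.\ statements hold off one common null set, and that the chosen $f_p(t)$ genuinely lie in $\phi_p(X)$ rather than only in the completion $\overline{X}_p$. The analytic heart — that an $sAC$ Banach-valued function with suitably small averaged average range is a Bochner primitive — is essentially Theorem \ref{T1.1} combined with the elementary observation that $\bigcap_\delta \overline{A_{F_p}(t,\delta)}$ being a bounded singleton at a.e.\ $t$ is exactly differentiability a.e.; the novelty is purely in the projective-limit transfer.
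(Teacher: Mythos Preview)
Your overall strategy---push everything through $\phi_p$, apply the Banach-space result, and reassemble---is exactly the paper's, but your assembly step contains a genuine gap that the paper's argument is specifically designed to avoid. You propose to define $f(t)$ as the point of $\underset{\longleftarrow}{\lim}\,\overline g_{pq}\,\overline X_q$ determined by the family $(f_p(t))_p$, where $f_p = (\phi_p\circ F)'$ a.e. The compatibility $\overline g_{pq}\circ f_q = f_p$ holds only off a null set $N_{p,q}$ that depends on the pair $(p,q)$, and likewise each $f_p$ is only defined off a $p$-dependent null set. Since $\mathscr{P}$ is in general uncountable (the theorem is stated for an arbitrary complete locally convex space, not a Fr\'echet space), $\bigcup_{p,q} N_{p,q}$ need not have measure zero, so there is no reason the family $(f_p(t))_p$ should be compatible at \emph{any} $t$. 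You flag this yourself as ``the main obstacle,'' and it is not merely bookkeeping: without a countable cofinal family of seminorms there is no evident way to force a common null set. The same issue contaminates your lifting of the simple functions, since the claim that $f_p(t)\in\phi_p(X)$ rather than $\overline X_p$ again rests on the pointwise existence of an $X$-valued $f$.

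The paper circumvents this by reversing the order of operations: it first proves, for each fixed $t$ in the \emph{single} hypothesis null set's complement, that $A_F(t)\neq\emptyset$. This is a pointwise projective-limit argument (Claims~2 and~3 in the proof): from the limit-average-range condition one extracts, for each $p$, a Cauchy sequence of difference quotients giving $\overline w_p\in\overline X_p$ with $A_{\phi_p\circ F}(t)=\{\overline w_p\}$; compatibility $\overline g_{pq}(\overline w_q)=\overline w_p$ is checked directly at that single $t$, and \eqref{eqSCH} produces $w\in X$ lying in $A_F(t)$. One then \emph{chooses} $f(t)\in A_F(t)\subset X$ once and for all. Only afterwards, with an honest $X$-valued $f$ in hand, does one verify for each $p$ separately that $\phi_p\circ f$ is Bochner integrable (here the $p$-dependent null sets from Lemmas~2.1 and~2.4 of \cite{SOK00} are harmless, since Definition~\ref{D1.1} allows $Z_p$ to vary with $p$). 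The vector $x_E$ in condition~(iii) is supplied not by another projective-limit argument but directly by the $X$-valued vector measure $\nu$ induced by the $sAC$ function $F$. In short: define $f$ first via $A_F(t)$, then check integrability seminorm by seminorm---not the other way around.
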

\begin{proof}
By hypothesis, there exists a subset 
$Z \subset [0,1]$ with $\lambda(Z)=0$ such that 
$F$ has the limit average range at all 
$t \in [0,1] \setminus Z$.

$Claim ~1.$ 
For any $p \in \mathscr{P}$, the function $\phi_{p} \circ F$ has the limit average range at all 
$t \in [0,1] \setminus Z$.
To see this, assume that an arbitrary $t \in [0,1] \setminus Z$ 
and $\varepsilon >0$ are given. 
Then, since $F$ has the limit average range at $t$, 
there exists $\delta_{\varepsilon,p}>0$ such that
\begin{equation}\label{eqC1.1}
p-diam(A_{F}(t, \delta_{\varepsilon,p})) 
< 
p-diam(A_{F}(t)) + \varepsilon. 
\end{equation}
It is easy to see that 
$$
x \in \overline{A}^{(p)}_{F}(t,\delta) \Leftrightarrow 
\phi_{p}(x) \in \overline{A}^{(\widetilde{p})}_{\phi_{p} \circ F}(t,\delta)
$$
and
$$
\overline{A}^{(\widetilde{p})}_{\phi_{p} \circ F}(t,\delta)
\subset
\overline{A}^{(\overline{p})}_{\phi_{p} \circ F}(t,\delta),
$$
and since
$$
A_{\phi_{p} \circ F}(t) = 
\bigcap_{\delta>0} 
\overline{A}^{(\overline{p})}_{\phi_{p} \circ F}(t,\delta),
$$
it follows that
\begin{equation*}
\begin{split}
\phi_{p} \left ( A_{F}(t) \right ) 
&\subset 
\phi_{p} \left ( A_{F}^{(p)}(t) \right ) 
\subset
\bigcap_{\delta >0} \phi_{p}(\overline{A}^{(p)}_{F}(t,\delta)) \\
&=
\bigcap_{\delta >0} \overline{A}^{(\widetilde{p})}_{\phi_{p} \circ F}(t,\delta) 
\subset 
\bigcap_{\delta >0} \overline{A}^{(\overline{p})}_{\phi_{p} \circ F}(t,\delta) \\
&=
A_{\phi_{p} \circ F}(t).
\end{split}
\end{equation*}
Hence
\begin{equation}\label{eqC1.2} 
p-diam(A_{F}(t)) = \widetilde{p}-diam[\phi_{p} \left ( A_{F}(t) \right )]
\leq 
\overline{p}-diam(A_{\phi_{p} \circ F}(t)). 
\end{equation}
The equality 
$$
p-diam(A_{F}(t, \delta)) =
\overline{p}-diam(A_{\phi_{p} \circ F}(t,\delta))
$$ 
together with \eqref{eqC1.1} and \eqref{eqC1.2} yields
\begin{equation*}
\overline{p}-diam(A_{\phi_{p} \circ F}(t, \delta_{\varepsilon,p})) 
< 
\overline{p}-diam(A_{\phi_{p} \circ F}(t)) + \varepsilon.  
\end{equation*}
This means that $\phi_{p} \circ F$ has the limit average range at $t$, and since $t$ is arbitrary, 
$\phi_{p} \circ F$ has the limit average range at all $t \in [0,1] \setminus Z$.

Let us now show that 
$$
A_{F}(t) \neq \emptyset,
\quad
\text{at all}
\quad
t \in [0,1] \setminus Z.
$$ 
Fix an arbitrary $t \in [0,1] \setminus Z$ 
and suppose that
\begin{equation}\label{eqT31.1}
A_{F}(t) = \bigcap_{p \in \mathscr{P}} A_{F}^{(p)}(t) = \emptyset.
\end{equation} 
Then, for any $p \in \mathscr{P}$, by the definition of the limit average range, 
there is a decreasing sequence $(\delta_{n}^{p})_{n}$ of real numbers such that 
for each $n \in \mathbb{N}$, we have
\begin{equation}\label{eqT31.2}
0 < \delta_{n}^{p} < \frac{1}{n}
\quad
\text{and}
\quad
p-diam(A_{F}(t, \delta_{n}^{p})) < p-diam(A_{F}(t)) + \frac{1}{n}=\frac{1}{n}.
\end{equation}
Define a sequence $(\Delta F(t, h_{n}^{p}))_{n}$ by choosing 
a vector $\Delta F(t, h_{n}^{p}) \in A_{F}(t, \delta_{n}^{p})$,  
for each $n \in \mathbb{N}$. 
Note that
\begin{equation*}
p(\Delta F(t, h_{m}^{p}) - \Delta F(t, h_{n}^{p})) < \frac{1}{n},
\end{equation*}
whenever $m>n$. 
This means that $(\Delta (\phi_{p} \circ F)(t, h_{n}^{p}))_{n}$ 
is a Cauchy sequence. 
Therefore, there exists 
$\overline{w}_{p} \in \overline{X}_{p}$ such that
\begin{equation}\label{eqC2.01}
\lim_{n \to \infty} \overline{p}
\left [
\Delta(\phi_{p} \circ F)(t, h_{n}^{p}) - \overline{w}_{p}
\right ]
=0.
\end{equation}

$Claim ~2.$ 
For each $p \in \mathscr{P}$, we have that the equality
\begin{equation}\label{eqC2.1}
A_{\phi_{p} \circ  F}(t) = \{ \overline{w}_{p} \} 
\end{equation} 
holds.

To see this, fix an arbitrary $p \in \mathscr{P}$. 
Since
$$
A_{\phi_{p} \circ  F}(t) 
\subset 
\overline{A}^{(\overline{p})}_{\phi_{p} \circ  F}(t, \delta^{p}_{n}),
\quad
\text{for all}
\quad
n \in \mathbb{N},
$$ 
and 
\begin{equation*}
\begin{split}
\overline{p}-diam(\overline{A}^{(\overline{p})}_{\phi_{p} \circ  F}(t, \delta^{p}_{n})) 
&=
\overline{p}-diam(A_{\phi_{p} \circ  F}(t, \delta^{p}_{n})) \\
&=
\widetilde{p}-diam(A_{\phi_{p} \circ  F}(t, \delta^{p}_{n})) \\
&=
p-diam(A_{F}(t, \delta^{p}_{n})), 
\end{split}
\end{equation*}
from \eqref{eqT31.2} we obtain
\begin{equation}\label{eqC2.2}
\overline{p}-diam(A_{\phi_{p} \circ  F}(t))=0.
\end{equation}
The equality \eqref{eqC2.01} together with 
$$
A_{\phi_{p} \circ F}(t) = 
\bigcap_{n=1}^{\infty} 
\overline{A}^{(\overline{p})}_{\phi_{p} \circ F} ( t, \frac{1}{n} )
$$
yields that $\overline{w}_{p} \in A_{\phi_{p} \circ  F}(t)$ 
and, therefore by \eqref{eqC2.2}, 
it follows that 
$A_{\phi_{p} \circ  F}(t) = \{ \overline{w}_{p} \}$. 
Since $p$ is arbitrary, the last equality holds for all $p \in \mathscr{P}$.

$Claim~3.$ There exists $w \in X$ such that
\begin{equation}\label{eqC3.1}
\phi_{p}(w)=\overline{w}_{p},
\quad
\text{for all}
\quad
p \in \mathscr{P}.
\end{equation}
Assume that $p,q \in \mathscr{P}$ such that $p \leq q$ are given. 
Since
$$
\lim_{n \to \infty} \overline{q}
\left [
\Delta(\phi_{q} \circ F)(t, h_{n}^{q}) - \overline{w}_{q}
\right ]
=0
$$
we obtain
\begin{equation*}
\begin{split} 
&\lim_{n \to \infty} 
\overline{p}
\left [
\Delta(\phi_{p} \circ F)(t, h_{n}^{q})) - 
\overline{g}_{pq}(\overline{w}_{q})
\right ]
\\
=&
\lim_{n \to \infty} 
\overline{p}
\left [
\widetilde{g}_{pq} \left ( \Delta(\phi_{q} \circ F)(t, h_{n}^{q}) \right ) - 
\overline{g}_{pq}(\overline{w}_{q})
\right ]
\\ 
=&
\lim_{n \to \infty} 
\overline{p}
\left[
\overline{g}_{pq}
\left (
\Delta(\phi_{q} \circ F)(t, h_{n}^{q}) - \overline{w}_{q}
\right )
\right ]
=0.
\end{split}
\end{equation*}
This yields that 
$\overline{g}_{pq}(\overline{w}_{q}) \in A_{\phi_{p} \circ F}(t)$ 
and, therefore from \eqref{eqC2.1}, we obtain 
$$
\overline{g}_{pq}(\overline{w}_{q})=\overline{w}_{p}.
$$ 
Hence, by \eqref{eqSCH}, 
there exists $w \in X$ such that \eqref{eqC3.1} holds true.

By \eqref{eqC2.1} and \eqref{eqC3.1}, we get
$$
\bigcap_{\delta>0} 
\overline{A}^{(\widetilde{p})}_{\phi_{p} \circ F}(t,\delta) = \{ \phi_{p}(w) \},
\quad
\text{for all}
\quad
p \in \mathscr{P},
$$
and since  
$\phi_{p}(\overline{A}^{(p)}_{F}(t,\delta)) 
=
\overline{A}^{\widetilde{p}}_{\phi_{p} \circ F}(t,\delta)$, 
we obtain
$$
w \in \bigcap_{p \in \mathscr{P}} A_{F}^{(p)}(t).
$$
But, this contradicts \eqref{eqT31.1}. 
Consequently, $A_{F}(t) \neq \emptyset$, and since $t$ is arbitrary, 
the last result holds for all $t \in [0,1] \setminus Z$. 
Then, we can choose a vector $x_{t} \in A_{F}(t)$, 
for each $t \in [0,1] \setminus Z$, and define the function 
$f:[0,1] \to X$ as follows
\begin{equation}
\quad
\mathnormal{f(t)}=
\left \{ \begin{array}{ll}
x_{t}  &\text{if }t \in [0,1] \setminus Z \\
0  &\text{if } t \in Z 
\end{array} \right..
\end{equation}
We are going to prove that 
$f$ is an integrable by seminorms function satisfying \eqref{eqT21.0}.

Since $\phi_{p} \circ F$ is $sAC$ and has the limit average range at almost all $t \in [0,1]$,  
by Lemma 2.4 in \cite{SOK00}, 
$\phi_{p} \circ F$ is differentiable almost everywhere on $[0,1]$, and since 
$$
(\phi_{p} \circ f)(t) \in A_{\phi_{p} \circ F}(t),
\quad
\text{at almost all}
\quad
t \in [0,1],  
$$ 
by Lemma 2.1 in \cite{SOK00},  
we obtain 
$$
(\phi_{p} \circ F)'(t) = (\phi_{p} \circ f)(t),
\quad 
\text{at almost all}
\quad
t \in [0,1].
$$ 
Hence, by Theorem \ref{T1.1}, 
$\phi_{p} \circ f$ is Bochner integrable with the primitive 
$\phi_{p} \circ \widetilde{F}$, i.e., 
\begin{equation}\label{eqT31.b}
(\phi_{p} \circ \widetilde{F})(I) = 
(B)\int_{I} (\phi_{p} \circ f)(t)d\lambda(t), 
\quad
\text{for all}
\quad
I \in \mathcal{I}.
\end{equation}

Since $F$ is $sAC$, 
there exists a unique countable additive $\lambda$-continuous vector measure 
$\nu : \mathcal{L} \to X$ such that $\widetilde{F}(I) = \nu(I)$, for all 
$I \in \mathcal{I}$.   
Hence, from \eqref{eqT31.b}, we obtain
\begin{equation}\label{T2e.3}  
(\phi_{p} \circ \nu)(E) = (B)\int_{E} (\phi_{p} \circ f)(t) d\lambda(t),
\quad
\text{for all}
\quad
E \in \mathcal{L}.
\end{equation}

Since $\phi_{p} \circ f$ is Bochner integrable, 
by Lemma \ref{L2.1}, 
there exists a sequence $(f_{k}^{(p)})$ of measurable simple functions 
such that 
\begin{equation}\label{eqDOM}
(\phi_{p} \circ f_{k}^{(p)})([0,1]) \subset 
(\phi_{p} \circ f)([0,1]),
\quad 
\text{for all}
\quad
k \in \mathbb{N}, 
\end{equation}
\begin{equation}\label{T2e.4} 
\lim_{k \to \infty} p \left ( f_{k}^{(p)}(t) - f(t) \right ) =
\lim_{k \to \infty}\widetilde{p} 
\left ( (\phi_{p} \circ f_{k}^{(p)})(t) - (\phi_{p} \circ f)(t)
\right ) =0, 
\end{equation}
at almost all $t \in [0,1]$, 
and the sequence $(\phi_{p} \circ f_{k}^{(p)})$ 
converges to $\phi_{p} \circ f$ in $\lambda$-measure.

By Theorem II.1.2 in \cite{DIES}, 
the function $\widetilde{p}( (\phi_{p} \circ  f)(\cdot) )$ is Lebesgue integrable 
and, therefore it is bounded almost everywhere on $[0,1]$. 
Thus, there exists $M>0$ such that 
$$
p(f(t)) = \widetilde{p}( (\phi_{p} \circ  f)(t) ) \leq M,
\quad
\text{at almost all}
\quad
t \in [0,1].
$$ 
The last result together with \eqref{eqDOM} yields 
$$
p[f_{k}^{(p)}(t)] 
= \widetilde{p}[(\phi_{p} \circ f_{k}^{(p)})(t))]
\leq M,
\quad
\text{at almost all}
\quad
t \in [0,1],
$$
for every $k \in \mathbb{N}$. 
Therefore, by Dominated Convergence Theorem, 
Theorem II.2.3 in \cite{DIES}, 
we obtain
\begin{equation}\label{T2e.5}
\lim_{k \to \infty}
\widetilde{p} 
\left ( 
\int_{E} (\phi_{p} \circ f_{k}^{(p)})(t)d\lambda(t) - 
(B)\int_{E} (\phi_{p} \circ f)(t) d\lambda(t)
\right ) = 0, 
\end{equation}
for each $E \in \mathcal{L}$, 
and
\begin{equation}\label{T2e.6}
\lim_{k \to \infty} \int_{[0,1]} 
p \left ( f_{k}^{(p)}(t) -f(t) \right ) d\lambda(t) =0.
\end{equation}  
By virtue of \eqref{T2e.3} and \eqref{T2e.5}, we obtain
\begin{equation*}
\lim_{k \to \infty} p
\left ( 
\int_{E} f^{(p)}_{k}(t)d\lambda(t) -\nu(E) 
\right ) 
= 0.
\end{equation*}
By Definition \ref{D1.1}, 
the last equality together with \eqref{T2e.4} and \eqref{T2e.6} yields 
that $f$ is integrable by seminorms and
$$
\widetilde{F}(I) = \nu(I)=\int_{I} f(t) d\lambda(t), 
\quad
\text{for each}
\quad
I \in \mathcal{I},
$$ 
and this ends the proof. 
\end{proof}

The next theorem presents full descriptive characterizations of 
the integral by seminorms  in a Frechet space.

\begin{theorem}\label{T2.2}
Let $X$ be a Frechet space and let $f,F:[0.1] \to X$ be functions. 
Assume that $(p_{k})_{k \in \mathbb{N}}$ is a countable family of increasing seminorms on $X$ so that the topology of $X$ is generated by 
$(p_{k})_{k \in \mathbb{N}}$.
Then the following are equivalent:
\begin{itemize}
\item[(i)]
$f$ is the integrable by seminorms with the primitive $F$, i.e.,
\begin{equation*}
\widetilde{F}(I) = \int_{I} f(t) d\lambda(t), 
\quad
\text{for all}
\quad
I \in \mathcal{I}, 
\end{equation*}
\item[(ii)]
$F$ is $sAC$, $F$ has the limit average range at almost all $t \in [0,1]$ and
\begin{equation*}
f(t) \in A_{F}(t),
\text{ almost everywhere on }
[0,1].
\end{equation*}
\end{itemize}
\end{theorem}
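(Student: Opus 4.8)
The plan is to treat the two implications separately, relying on Theorem~\ref{T1.1}, Theorem~\ref{T2.1} and the two lemmas of \cite{SOK00} already invoked in the proof of Theorem~\ref{T2.1}. Three elementary facts will be used throughout: that $\widetilde{p}_{k}(\phi_{p_{k}}(x))=p_{k}(x)$ for all $x\in X$, that $\overline{p}_{k}$ restricts to $\widetilde{p}_{k}$ on $\widetilde{X}_{p_{k}}$, and that, since $(p_{k})$ generates $\tau$ and is increasing, every $p\in\mathscr{P}$ satisfies $p\leq Cp_{k}$ for some $k\in\mathbb{N}$ and $C>0$; in particular the family $(p_{k})$ separates the points of $X$.

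For $(i)\Rightarrow(ii)$ I would fix $k$ and read off from Definition~\ref{D1.1}, applied to $p_{k}$, that the simple functions $\phi_{p_{k}}\circ f_{j}^{(p_{k})}$ take values in $\widetilde{X}_{p_{k}}\subset\overline{X}_{p_{k}}$, converge to $\phi_{p_{k}}\circ f$ both $\lambda$-almost everywhere and in $L^{1}$-mean; hence $\phi_{p_{k}}\circ f$ is Bochner integrable into the Banach space $\overline{X}_{p_{k}}$ and $\int_{E}(\phi_{p_{k}}\circ f)\,d\lambda=\phi_{p_{k}}\bigl(\int_{E}f\,d\lambda\bigr)$ for every $E\in\mathcal{L}$. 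Specialising $E$ to intervals and using (i), $\phi_{p_{k}}\circ F$ is the Bochner primitive of $\phi_{p_{k}}\circ f$, so Theorem~\ref{T1.1} gives that $\phi_{p_{k}}\circ F$ is $sAC$ and $(\phi_{p_{k}}\circ F)'(t)=\phi_{p_{k}}(f(t))$ outside a null set $Z_{k}$. From the domination $p\leq Cp_{k}$ it follows that $\phi_{p}\circ F$ is $sAC$ for every $p\in\mathscr{P}$, i.e.\ $F$ is $sAC$. Putting $Z=\bigcup_{k}Z_{k}$, for $t\notin Z$ and every $k$ we have $p_{k}(\Delta F(t,h)-f(t))=\widetilde{p}_{k}(\Delta(\phi_{p_{k}}\circ F)(t,h)-\phi_{p_{k}}(f(t)))\to 0$ as $h\to 0$, and since $(p_{k})$ generates $\tau$ this says $F'(t)=f(t)$. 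Thus $F$ is differentiable $\lambda$-almost everywhere; at each such $t$ the set $A_{F}(t)=\{f(t)\}$ is bounded and, by differentiability, the diameter of $A_{F}(t,\delta)$ with respect to any $p$ tends to $0$ as $\delta\to 0$, whence $F$ has the limit average range at $t$ and $f(t)\in A_{F}(t)$. This is (ii).

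For $(ii)\Rightarrow(i)$ I would first apply Theorem~\ref{T2.1}, which is available since a Frechet space is a complete locally convex space: it produces an integrable by seminorms function $g$ with $\widetilde{F}(I)=\int_{I}g\,d\lambda$ for all $I\in\mathcal{I}$ and, by the construction in its proof, $g(t)\in A_{F}(t)$ outside a null set. The task is then to show $f=g$ almost everywhere. Fixing $k$, $\phi_{p_{k}}\circ F$ is $sAC$ (as $F$ is $sAC$) and, by Claim~1 in the proof of Theorem~\ref{T2.1}, has the limit average range $\lambda$-almost everywhere, hence is differentiable $\lambda$-almost everywhere by Lemma~2.4 in \cite{SOK00}; moreover the inclusion $\phi_{p_{k}}(A_{F}(t))\subset A_{\phi_{p_{k}}\circ F}(t)$ established there shows that $\phi_{p_{k}}\circ f$ and $\phi_{p_{k}}\circ g$ both take values in $A_{\phi_{p_{k}}\circ F}(t)$ at almost every $t$, so by Lemma~2.1 in \cite{SOK00} we obtain $(\phi_{p_{k}}\circ F)'(t)=\phi_{p_{k}}(f(t))=\phi_{p_{k}}(g(t))$, and therefore $p_{k}(f(t)-g(t))=0$, for $t$ outside a $k$-dependent null set. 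Taking the union over $k\in\mathbb{N}$ and using that $(p_{k})$ separates points gives $f=g$ $\lambda$-almost everywhere. Finally, replacing in Definition~\ref{D1.1}, written for $g$, each exceptional set $Z_{p}$ by $Z_{p}\cup\{t:f(t)\neq g(t)\}$ while keeping the same sequences of simple functions shows that $f$ is integrable by seminorms with $\int_{E}f\,d\lambda=\int_{E}g\,d\lambda$; in particular $\widetilde{F}(I)=\int_{I}f\,d\lambda$ for all $I\in\mathcal{I}$, which is (i).

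The measure-theoretic steps above are routine. The one genuinely delicate point is the passage, inside $(i)\Rightarrow(ii)$, from differentiability of $\phi_{p_{k}}\circ F$ for each $k$ to differentiability of $F$ in $X$: this is precisely where the \emph{countability} of the generating family (equivalently, the metrizability of $X$) enters, since it allows the union $Z=\bigcup_{k}Z_{k}$ to remain $\lambda$-null. In the general complete locally convex setting the analogous union runs over an uncountable index set and need not be $\lambda$-null, and indeed Example~\ref{Example2.1} exhibits a primitive $F$ that has the limit average range almost everywhere yet is nowhere differentiable.
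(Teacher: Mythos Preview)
Your proof is correct and, in the direction $(i)\Rightarrow(ii)$, takes a genuinely different---and simpler---route than the paper. The paper never asserts that $F$ is differentiable in $X$; instead it works componentwise: from $(\phi_{p_k}\circ F)'(t)=\phi_{p_k}(f(t))$ it invokes Lemma~2.1 of \cite{SOK00} to get $A_{\phi_{p_k}\circ F}(t)=\{\phi_{p_k}(f(t))\}$, pulls this back to show $p_k\text{-diam}(A_F(t))=0$ and $p_k\text{-diam}(A_F(t,\delta))\to 0$, and then, to show $A_F(t)\neq\emptyset$, repeats the projective-limit argument (Claims~2 and~3) from the proof of Theorem~\ref{T2.1}; finally it argues that any element of $A_F(t)$ must project to $\phi_{p_k}(f(t))$ for every $k$, hence equals $f(t)$ by Hausdorffness. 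You short-circuit all of this by observing that the countable union $Z=\bigcup_k Z_k$ is still null, so for $t\notin Z$ one has $p_k(\Delta F(t,h)-f(t))\to 0$ for \emph{every} $k$ simultaneously, hence (via $p\le Cp_k$) for every $p\in\mathscr{P}$; thus $F'(t)=f(t)$ exists in $X$, and $A_F(t)=\{f(t)\}$ together with the limit-average-range property are immediate consequences. This is cleaner and yields the stronger conclusion that $F$ is differentiable almost everywhere---in effect recovering a Fr\'echet-space analogue of Theorem~\ref{T1.1}.

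For $(ii)\Rightarrow(i)$ the paper simply writes ``By virtue of Theorem~\ref{T2.1}'', implicitly relying on the fact that the proof of Theorem~\ref{T2.1} works for \emph{any} selection $f(t)\in A_F(t)$, not just the one constructed there. You read Theorem~\ref{T2.1} at the level of its statement (producing some $g$), then show $f=g$ almost everywhere and transfer integrability via Definition~\ref{D1.1}. This is more explicit but arrives at the same point; alternatively you could just note that the argument of Theorem~\ref{T2.1} after the words ``we can choose a vector $x_t\in A_F(t)$'' applies verbatim with the given $f$ in place of the constructed one.
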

\begin{proof}
$(i) \Rightarrow (ii)$ 
Assume that $f$ is the integrable by seminorms with the primitive $F$. 
Then, each function 
$\phi_{p_{k}} \circ f$ is Bochner integrable with the primitive 
$\phi_{p_{k}} \circ \widetilde{F}$, i.e., 
\begin{equation*}
(\phi_{p_{k}} \circ \widetilde{F})(I) = (B) \int_{I} f(t) d\lambda(t),
\quad
\text{for all}
\quad
I \in \mathcal{I}.
\end{equation*}
Hence, by Theorem \ref{T1.1}, 
$\phi_{p_{k}} \circ F$ is $sAC$ and there exists $Z_{k} \subset [0,1]$ with $\lambda(Z_{k}) = 0$ such that 
$(\phi_{p_{k}} \circ F)'(t)$ exists and
\begin{equation*}
(\phi_{p_{k}} \circ F)'(t) = (\phi_{p_{k}} \circ f)(t),
\quad
\text{at all}
\quad
t \in [0,1] \setminus Z_{k}.
\end{equation*}
Further, by Lemma 2.1 in \cite{SOK00},   
$\phi_{p_{k}} \circ F$ has the limit average range at $t$ and
\begin{equation}\label{eqT22.1} 
A_{\phi_{p_{k}} \circ F}(t) = \{ (\phi_{p_{k}} \circ f)(t) \},
\quad
\text{for all}
\quad
t \in [0,1] \setminus Z_{k}.
\end{equation}

We now fix a point $t \in [0,1] \setminus Z$, 
where $Z = \cup_{k=1}^{+\infty} Z_{k}$ and $\lambda(Z)=0$. 
We will prove that
\begin{itemize}
\item[(a)]
$F$ has the limit average range at $t$,
\item[(b)]
$f(t) \in A_{F}(t)$.
\end{itemize}

$(a)$ 
Since $\phi_{p_{k}} \circ F$ has the limit average range at $t$, 
given $\varepsilon > 0$ there exists $\delta_{\varepsilon}^{(k)}>0$ such that
\begin{equation}\label{eqT22.2}
\overline{p}_{k}-diam( A_{\phi_{p} \circ  F}(t, \delta_{\varepsilon}^{(k)} ) ) 
< 
\overline{p}_{k}-diam( A_{\phi_{p} \circ  F}(t)) + \varepsilon 
= 
\varepsilon.
\end{equation}
By inclusions 
$
A_{F}(t) 
\subset 
A^{(p_{k})}_{F}(t) 
$
and
$
\phi_{p_{k}} (A^{(p_{k})}_{F}(t)) 
\subset 
A_{\phi_{p_{k}} \circ F}(t) 
$ 
we obtain
$$
p_{k}-diam(A_{F}(t)) = 0. 
$$  
Hence, the equality
\begin{equation*}
\begin{split}
p_{k}-diam(A_{F}(t, \delta) 
= 
\overline{p}_{k}-diam(A_{\phi_{p} \circ  F}(t, \delta)),
\end{split}
\end{equation*}
together with \eqref{eqT22.2} yields
\begin{equation}\label{eqT22.3}
p_{k}-diam( A_{F}(t, \delta_{\varepsilon}^{(k)} ) ) 
< 
\varepsilon 
=
p_{k}-diam( A_{F}(t) ) + \varepsilon. 
\end{equation} 
Thus, $F$ has the limit average range at $t$.

$(b)$
At first, 
by the same manner as in the proof of Theorem \ref{T2.1},
we can prove that
\begin{equation*}
A_{F}(t) \neq \emptyset.
\end{equation*}
Then, we may choose a vector $x_{t} \in A_{F}(t)$. 
By \eqref{eqT22.1}, we obtain
\begin{equation*}
A_{\phi_{p_{k}} \circ F}(t) = \{ (\phi_{p_{k}} \circ f)(t) \},
\text{ for all }
k \in \mathbb{N}.
\end{equation*}
Hence
$$
\phi_{p_{k}}(x_{t}) =  \phi_{p_{k}}( f(t) ),
\text{ for all }
k \in \mathbb{N},
$$
and since $X$ is Hausdorff, we infer $x_{t} =f(t)$.

By virtue of Theorem \ref{T2.1}, we obtain $(ii) \Rightarrow (i)$, 
and this ends the proof.
\end{proof}

Finally, we give an example of a function $F : [0,1] \to X$ such that
\begin{itemize}
\item
$F$ is the primitive of an integrable by seminorms function $f$, 
\item
$F$ has the limit average range at all $t \in (0,1)$,
\item
$F$ is not differentiable at any $t \in (0,1)$.
\end{itemize}

\begin{example}\label{Example2.1}  
Let $X$ be the locally convex space $\mathbb{R}^{\mathcal{J}}$ with respect to the topology of pointwise convergence, where $\mathcal{J}=[0,1]$. 
Thus, the family of seminorms $(p_{j})_{j \in \mathcal{J}}$ 
defined as follows
$$
p_{j}(x) = |x(j)|
\quad
\text{for each}
\quad
j \in \mathcal{J},
~
x \in X,
$$
determines the topology of $X$. 
It easy to see that $X$ is a complete locally convex space 
and each normed space $(\widetilde{X}_{p_{j}}, \widetilde{p}_{j})$ 
is isometrically isomorphic with $(\mathbb{R},|.|)$. 
Hence, for each $j \in \mathcal{J}$, we can consider
$$
\widetilde{X}_{p_{j}} = \overline{X}_{p_{j}} = 
\mathbb{R}
\qquad
\widetilde{p}_{j} = \overline{p}_{j} = p_{j}
$$
and
$$
\phi_{p_{j}}(x) = x(j)
\quad
\text{for all}
\quad
x \in X. 
$$

Let us now define the function $F:[0,1] \to X$ as follows
\begin{equation*}
F(t) = x_{t}
\quad
\mathnormal{x_{t}(\theta)}=
\left \{ \begin{array}{ll}
\theta &\text{if~}~ 0 \leq \theta \leq t \\
t      &\text{if~}~t < \theta \leq 1 
\end{array} \right.
\quad
\text{for all}
\quad
t \in [0,1].
\end{equation*}


$(a)$ The function $F$ is $sAC$. To see this, fix an arbitrary 
$j \in \mathcal{J}$. Note that
\begin{equation*}
(\phi_{p_{j}} \circ F)(t) = x_{t}(j) = 
\left \{ \begin{array}{ll}
t   & \text{if~}~ 0 \leq t \leq j \\
j   & \text{if~}~ j < t \leq 1 
\end{array} \right..
\end{equation*}
Then, for each two points $t',t'' \in [0,1]$, we have
$$
p_{j}(F(t'')-F(t')) \leq |t''-t'|.
$$
This yields that $\varphi_{p_{j}} \circ F$ is sAC, and since $j$ is arbitrary, 
we infer that $F$ is $sAC$.


$(b)$ 
The function $F$ has the limit average range at all $t \in (0,1)$. 
Fix an arbitrary $t \in (0,1)$, $\delta>0$ and $h>0$ such that 
$$
0<h<\delta 
\quad
\text{and}
\quad
t-h,~ t+h \in (0,1).
$$
Note that
\begin{equation*}
\mathnormal{\Delta F(t,h)(\theta)}= \frac{x_{t+h}(\theta)-x_{t}(\theta)}{h} = x^{+}_{t,h}(\theta) =
\left \{ \begin{array}{ll}
0                                        &  \text{if~}~ 0 \leq \theta \leq t \\
\frac{\theta-t}{h}  &  \text{if~}~ t < \theta \leq t+h \\
1                                        &  \text{if~}~ t+h <\theta \leq 1 
\end{array} \right.
\end{equation*}
and
\begin{equation*}
\mathnormal{\Delta F(t,-h)(\theta)} = \frac{x_{t-h}(\theta)-x_{t}(\theta)}{-h} = x^{-}_{t,h}(\theta) =
\left \{ \begin{array}{ll}
0                                        &  \text{if~}~ 0 \leq \theta \leq t-h \\
\frac{\theta-(t-h)}{h}  &  \text{if~}~ t-h < \theta \leq t \\
1                                        &  \text{if~}~ t <\theta \leq 1 
\end{array} \right..
\end{equation*}

For each $j \in \mathcal{J}$, we have
\begin{equation}\label{eqEx1.0}
\lim_{n \to \infty} q_{j} \left ( \Delta F \left ( t,\frac{1}{n^{2}} \right ) - x^{+}_{t} \right ) = 0,
\quad
\lim_{n \to \infty} q_{j}\left ( \Delta F \left ( t,-\frac{1}{n^{2}} \right ) - x^{-}_{t} \right ) = 0,
\end{equation}
where 
\begin{equation*}
\mathnormal{x^{+}_{t}(\theta)}=
\left \{ \begin{array}{ll}
0         &\text{if~}~ 0 \leq \theta \leq t \\
1         &\text{if~}~ t <\theta \leq 1 
\end{array} \right.
\quad
\text{and}
\quad
\mathnormal{x^{-}_{t}(\theta)}=
\left \{ \begin{array}{ll}
0  &\text{if~}~ 0\leq \theta < t \\
1  &\text{if~}~ t \leq \theta \leq 1 
\end{array} \right..
\end{equation*}
Hence
$$
x^{+}_{t}, x^{-}_{t} \in A_{F}^{(q_{j})}(t)=
\bigcap_{n=1}^{\infty} \overline{A}_{F}^{(q_{j})} \left ( t,\frac{1}{n} \right ),
\text{ for all }
j \in \mathcal{J}.
$$ 
Therefore 
$$
\{ x^{+}_{t}, x^{-}_{t} \} \subset A_{F}(t).
$$

We now assume that $x \in A_{F}(t)$. 
Fix an arbitrary $j \in \mathcal{J}$ such that $0 \leq j <t$.  
Since
$$
x \in A_{F}^{(q_{j})}(t)=
\bigcap_{n=1}^{\infty} \overline{A}_{F}^{(q_{j})}\left ( t,\frac{1}{n} \right ),
$$
there exists a sequence $(h_{n})$ of real numbers such that
\begin{equation}\label{eqEx1.1}
0 < |h_{n}| < \frac{1}{n}
\quad
\text{and}
\quad
\lim_{n \to \infty} q_{j} ( \Delta F(t,h_{n}) - x) = 0.
\end{equation}
From the fact that $j < t$, it follows that there exists $n_{j}^{-} \in \mathbb{N}$ such that for each $n \in \mathbb{N}$, we have
\begin{equation*}
n \geq n_{j}^{-} \Rightarrow 
\Delta F(t,h_{n})(j) = 0.
\end{equation*}
The last result together with \eqref{eqEx1.1} yields $x(j)=0$, and since $j$ is arbitrary, $x(j)=0$ for all $0 \leq j <t$. 
Similarly, we get that $x(j)=1$ for all $t < j \leq 1$. 

Let us now consider $j=t$. 
By the same way as above 
there exists a sequence $(h_{n})$ of real numbers such that \eqref{eqEx1.1} is satisfied. There exists a subsequence $(h_{n_{k}})$ of $(h_{n})$ such that 
$h_{n_{k}} >0$ for all $k$, or $h_{n_{k}} < 0$ for all $k$. 
In the first case, we get $x(t) = 0$, and similarly $x(t) = 1$ for the second case. 
Thus, $x \in \{ x^{+}_{t}, x^{-}_{t} \}$, and we infer
$$
A_{F}(t) = \{ x^{+}_{t}, x^{-}_{t} \}. 
$$

Note that if $j \neq t$, then for $0<\delta<|t-j|$ we have 
\begin{equation*}
p_{j}-diam(A_{F}(t,\delta))=0
\quad
\text{and} 
\quad
p_{j}-diam(A_{F}(t))=0,
\end{equation*}
otherwise if $j=t$, then for each $\delta > 0$, we have
\begin{equation*}
p_{t}-diam(A_{F}(t,\delta))=1
\quad
\text{and}
\quad
p_{t}-diam(A_{F}(t))=1.
\end{equation*}
Therefore, $F$ has the limit average range at $t$, and since $t$ is arbitrary we infer that $F$ has the limit average range at all $t \in (0,1)$.

By virtue of Theorem \ref{T2.1},  
the functions $f_{1}, f_{2} : [0,1] \to X$ defined as follows
$$
f_{1}(t) = x^{+}_{t},
\quad 
f_{2}(t) = x^{-}_{t},
\quad
\text{for all}
\quad
t \in [0,1],
$$ 
are integrable by seminorms with the primitive $F$.

$(c)$ The function $F$ is not differentiable at any $t \in (0,1)$. 
Indeed, \eqref{eqEx1.0} yields immediately desired  result.
\end{example}

\end{document}